%version as of 1/15/13
\documentclass{amsart} %[12pt]
\usepackage{amscd,amssymb,latexsym,subfigure,verbatim,hyperref,epsfig,
amsmath, supertabular,xypic}
\usepackage{calc}
\usepackage{amsmath}
\usepackage{graphicx}

\begin{document}

% \mmbox enables macros to survive outside of $ ... $
\newcommand{\mmbox}[1]{\mbox{${#1}$}}
\newcommand{\proj}[1]{\mmbox{{\mathbb P}^{#1}}}
\newcommand{\affine}[1]{\mmbox{{\mathbb A}^{#1}}}
\newcommand{\Ann}[1]{\mmbox{{\rm Ann}({#1})}}
\newcommand{\caps}[3]{\mmbox{{#1}_{#2} \cap \ldots \cap {#1}_{#3}}}
\newcommand{\N}{{\mathbb N}}
\newcommand{\Z}{{\mathbb Z}}
\newcommand{\Q}{{\mathbb Q}}
\newcommand{\R}{{\mathbb R}}
\newcommand{\K}{{\mathbb K}}
\newcommand{\p}{{\mathbb P}}
\newcommand{\A}{{\mathcal A}}
\newcommand{\CC}{{\mathcal C}}
\newcommand{\C}{{\mathbb C}}
\newcommand{\OO}{{\mathcal O}}
\newcommand{\TT}{{\mathcal T}}

\newcommand{\Tor}{\mathop{\rm Tor}\nolimits}
\newcommand{\Ext}{\mathop{\rm Ext}\nolimits}
\newcommand{\Hom}{\mathop{\rm Hom}\nolimits}
\newcommand{\im}{\mathop{\rm Im}\nolimits}
\newcommand{\reg}{\mathop{\rm reg}\nolimits}
\newcommand{\syz}{\mathop{\rm syz}\nolimits}
\newcommand{\sing}{\mathop{\rm sing}\nolimits}
\newcommand{\pdim}{\mathop{\rm pdim}\nolimits}
\newcommand{\Sing}{\mathop{\rm Sing}\nolimits}
\newcommand{\HP}{\mathop{\rm HP}\nolimits}
\newcommand{\HS}{\mathop{\rm HS}\nolimits}
\newcommand{\Der}{\mathop{\rm Der}\nolimits}
\newcommand{\rank}{\mathop{\rm rank}\nolimits}
\newcommand{\supp}{\mathop{\rm supp}\nolimits}
\newcommand{\coker}{\mathop{\rm coker}\nolimits}
\sloppy
\newtheorem{defn0}{Definition}[section]
\newtheorem{prop0}[defn0]{Proposition}
\newtheorem{conj0}[defn0]{Conjecture}
\newtheorem{thm0}[defn0]{Theorem}
\newtheorem{lem0}[defn0]{Lemma}
\newtheorem{corollary0}[defn0]{Corollary}
\newtheorem{example0}[defn0]{Example}

\newenvironment{defn}{\begin{defn0}}{\end{defn0}}
\newenvironment{prop}{\begin{prop0}}{\end{prop0}}
\newenvironment{conj}{\begin{conj0}}{\end{conj0}}
\newenvironment{thm}{\begin{thm0}}{\end{thm0}}
\newenvironment{lem}{\begin{lem0}}{\end{lem0}}
\newenvironment{cor}{\begin{corollary0}}{\end{corollary0}}
\newenvironment{exm}{\begin{example0}\rm}{\end{example0}}

\newcommand{\defref}[1]{Definition~\ref{#1}}
\newcommand{\propref}[1]{Proposition~\ref{#1}}
\newcommand{\thmref}[1]{Theorem~\ref{#1}}
\newcommand{\lemref}[1]{Lemma~\ref{#1}}
\newcommand{\corref}[1]{Corollary~\ref{#1}}
\newcommand{\exref}[1]{Example~\ref{#1}}
\newcommand{\secref}[1]{Section~\ref{#1}}

\newcommand{\poina}{\pi({\mathcal A}, t)}
\newcommand{\poinc}{\pi({\mathcal C}, t)}

\newcommand{\std}{Gr\"{o}bner}
\newcommand{\jq}{J_{Q}}

%\parskip = 4pt

%\begin{singlespace}

\title[Logarithmic vector fields and curve configurations]{Logarithmic vector fields for quasihomogeneous curve configurations in $\p^2$}

\author{Hal Schenck}
\thanks{Schenck supported by NSF 1068754, NSA H98230-11-1-0170}\address{Schenck: Mathematics Department \\ University of Illinois \\
   Urbana \\ IL 61801\\USA}
\email{schenck@math.uiuc.edu}

\author{Hiroaki Terao}
\address{Terao: Department of Mathematics \\ Hokkaido University \\
  Sapporo \\ 060-0310 \\Japan}
\email{hterao00@za3.so-net.ne.jp}

\author{Masahiko Yoshinaga}
\address{Yoshinaga: Department of Mathematics \\  Hokkaido University \\
  Sapporo \\ 060-0310 \\Japan}
\email{yoshinaga@math.sci.hokudai.ac.jp}

\subjclass[2000]{Primary 52C35; Secondary 14J60} \keywords{curve arrangement, logarithmic forms.}

\begin{abstract}
\noindent Let ${\mathcal A}= \bigcup_{i=1}^r C_i \subseteq \mathbb{P}^2_{\mathbb{C}}$ be a collection of smooth plane curves, such that each 
singular point is quasihomogeneous. We prove that if 
$C$ is a smooth curve such that each singular point of 
${\mathcal A} \cup C$ is also quasihomogeneous, 
then there is an elementary modification of rank two bundles,
which relates the $\OO_{\p^2}$--module $\Der(\log {\mathcal A})$ 
of vector fields on $\p^2$ tangent to ${\mathcal A}$ to 
the module $\Der(\log {\mathcal A}\cup C)$. 
This yields an inductive tool for studying the splitting 
of the bundles $\Der(\log {\mathcal A})$ and $\Der(\log {\mathcal A}\cup C)$, 
depending on the geometry of the divisor ${\mathcal A}|_{C}$ on $C$.
\end{abstract}
\maketitle

%%%%%%%%%%%%%%%%%%%%%%%%%%%%%%%%%%%%%%%%%%%%%%%%%%%%%%%%
% Leave room to correct
%\renewcommand{\baselinestretch}{1.5}
%\small\normalsize % to get previous line to take
%%%%%%%%%%%%%%%%%%%%%%%%%%%%%%%%%%%%%%%%%%%%%%%%%%%%%%%%
\section{Introduction}\label{sec:intro}
For a divisor $Y$ in a complex manifold $X$, Saito \cite{slog}
introduced the sheaves of logarithmic vector fields and logarithmic 
one forms with pole along $Y$: 
\begin{defn}The module of logarithmic vector fields is the sheaf of
$\mathcal{O}_X$--modules
\[
\Der(\log Y)_p = \{ \theta \in \Der_{\C}(X)| \theta(f) \in \langle f \rangle \},
\]
where $f \in \mathcal{O}_{X,p}$ is a local defining equation for $Y$ at $p$. 
\end{defn}
Saito's work generalized earlier work of Deligne \cite{d}, where the
situation was studied for $Y$ a normal crossing divisor.
If $\{x_1,\ldots,x_d\}$ are local coordinates at a 
point $p \in X$ and $Y$ has local equation $f$, 
then $\Der(\log Y)_p$ is the kernel of the evaluation map 
$\theta \mapsto \theta(f) \in \mathcal{O}_{Y,p}$, so there is a short
exact sequence
\[
0 \longrightarrow \Der(\mbox{log }Y) \longrightarrow \mathcal{T}_X  
\longrightarrow J_Y(Y) \longrightarrow 0,
\]
where $J_Y$ is the Jacobian scheme, defined locally at $p$ by 
$\{\partial f/\partial_{x_1}, \ldots, \partial f/\partial_{x_d}\}$.
Saito shows that $\Der(\log Y)_p$ is a free $\OO_{X,p}$ module iff 
there exist $d$ elements 
\[
\theta_i = \sum\limits_{j=1}^d f_{ij}\frac{\partial}{\partial x_j} \in \Der(\log  Y)_p
\]
such that the determinant of the matrix $[f_{ij}]$ is a nonzero
constant multiple of the local defining equation for $Y$; this
is basically a consequence of the Hilbert-Burch theorem. 
A much studied version of this construction occurs when 
$Y = V(F) \subseteq \p^d$ is a reduced hypersurface; $V(F)$ may
also be studied as a hypersurface in $\mathbb{C}^{d+1}$. 
In particular, if $X=\mathbb{C}^{d+1}$ and $F \in S =\C[x_0,\ldots, x_d]$ 
is homogeneous, we write
\begin{defn}$D(V(F)) = \{ \theta \in Der_{\C}(S)| \theta(F) \in \langle F \rangle \}.$
\end{defn}
Since $F$ is homogeneous, $D(V(F))$ is a graded $S$--module, hence 
gives rise to an associated sheaf on $\p^d$. The kernel of the 
evaluation map $\theta \mapsto \theta(F)$
\[
D(V(F)) \rightarrow S
\]
consists of the syzygies on the Jacobian ideal of $F$, which we denote
$D_0(V(F))$. Since the Euler vector 
field $\sum x_i \partial/\partial x_i$ gives a surjection 
\[
D(V(F)) \rightarrow \langle F \rangle \rightarrow 0,
\]
we can split the map, hence
\[
D(V(F))\simeq D_0(V(F)) \oplus S(-1).
\]
In particular, if $F$ has degree $n$, then there is an 
exact sequence
\[
0 \longrightarrow D_0(V(F)) \longrightarrow S^{d+1} 
\stackrel{\theta \mapsto \theta(F)}{\longrightarrow} S(n-1) \longrightarrow S(n-1)/J_F
 \longrightarrow 0.
\]
This shows that $D(V(F))$ and the associated sheaf are second syzygies, 
so when $d=2$, $D(V(F))$ is a vector bundle on $\p^2$. Since the depth of
$D(V(F))$ is at least two, $D(V(F))$ is $\Gamma_*$ of the associated sheaf;
we use $D(V(F))$ to denote both the $S$--module and associated sheaf.
Tensoring $D(V(F))$ with $\OO_{\p^d}(1)$ yields the commutative diagram
\xymatrixrowsep{30pt}
\xymatrixcolsep{30pt}
\[
\xymatrix{
 & 0 \ar[d] & 0 \ar[d] \\
 0 \ar[r] & \OO_{\p^d}  \ar[r] \ar[d]^{\begin{tiny}\left[ \!
\begin{array}{c}
x_0\\
\vdots\\
x_d
\end{array}\! \right]\end{tiny}}   & \OO_{\p^d}  \ar[r] \ar[d]^{\begin{tiny}\left[ \!
\begin{array}{c}
x_0\\
\vdots\\
x_d
\end{array}\! \right]\end{tiny}} & 0 \ar[d]\\
0 \ar[r] & D(V(F))(1) \ar[r] \ar[d] & \OO_{\p^d}^{d+1}(1)  \ar[r]^{\gamma} \ar[d] & \OO_{V(F)} \ar[d] \\
0 \ar[r] & D(V(F))/E(1) \ar[r] \ar[d]  & \TT_{\p^d}  \ar[r] \ar[d]  & \OO_{V(F)}  \ar[d] \\
 & 0 & 0 & 0.
}
\]
The map $\gamma$ sends $\theta = \sum f_i \partial/\partial x_i$ to $\theta(F)$, so $\gamma$ surjects onto $J_F(n)$. Hence, 
\begin{equation}\label{DD}
\Der(\mbox{log }V(F)) \simeq D(V(F))/E(1) \simeq D_0(V(F))(1).
\end{equation}
In the case of generic hyperplane arrangements this is noted in 
\cite{ms}, we include it here to make precise
the relationship. A major impetus in studying $D(V(F))$ comes from 
the setting of hyperplane arrangements. If $F$ is a product of 
distinct linear forms, then write $V(F) = {\mathcal A}$. 
Terao's theorem \cite{t} shows that in this setting, if 
$D({\mathcal A})$ is a free $S$--module, with $D({\mathcal A}) \simeq
\oplus S(-a_i)$, then 
\[
\sum h^i(\C^{d+1}\setminus \mathcal{A},\Q)t^i = \prod(1+a_it).
\]
%This relates to a result of Tan-Yau \cite{TY}, who show that if complements
%of arrangements in $\p^2$ are homeomorphic, then their intersection lattices
%are isomorphic.
\subsection{Addition-Deletion theorems}
A central tool in the study of hyperplane arrangements
is an inductive method due to Terao. For a hyperplane 
arrangement ${\mathcal A}$ and choice of $H \in {\mathcal A}$, set
$${\mathcal A'} = {\mathcal A} \setminus H \mbox{ and }{\mathcal A''} =
{\mathcal A}|_H.$$ The collection
$({\mathcal A}', {\mathcal A}, {\mathcal A}'')$ is called a {\em triple},
and yields a left exact sequence
$$ 0 \longrightarrow D({\mathcal A}')(-1) \stackrel {\cdot H}{\longrightarrow}
D({\mathcal A}) \longrightarrow D({\mathcal A}'').$$ 
Freeness of a triple is related via:
\begin{thm}\label{thm:teraoAD}$[$Terao, \cite{t2}$]$
Let  $({\mathcal A}', {\mathcal A}, {\mathcal A}'')$ be a triple. 
Then any two of the following imply the third
\begin{itemize}
\item $D({\mathcal A})\simeq \oplus_{i=1}^n S(-b_i)$
\item $D({\mathcal A}')\simeq S(-b_n +1)\oplus_{i=1}^{n-1} S(-b_i)$
\item $D({\mathcal A}'')\simeq \oplus_{i=1}^{n-1} S(-b_i)$
\end{itemize}
\end{thm}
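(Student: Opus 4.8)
The plan is to treat the left exact sequence of the triple,
\[
0 \longrightarrow D({\mathcal A}')(-1) \stackrel{\cdot H}{\longrightarrow} D({\mathcal A}) \stackrel{\rho}{\longrightarrow} D({\mathcal A}''),
\]
as the backbone, and to detect freeness throughout via Saito's criterion in its homogeneous form: derivations $\theta_1,\dots,\theta_n \in D({\mathcal A})$ form an $S$-basis iff $\det[\theta_i]$ is a nonzero scalar multiple of the defining polynomial $Q_{{\mathcal A}}$. Since $\det[\theta_i]$ is always divisible by $Q_{{\mathcal A}}$, this happens exactly when the $\theta_i$ are independent over $S$ and $\sum_i\deg\theta_i=|{\mathcal A}|$. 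I will also record, writing $P(M)=\sum_k(\dim_{\C}M_k)t^k$ and $N=\coker\rho$, the bookkeeping identity coming from $0\to D({\mathcal A}')(-1)\to D({\mathcal A})\to D({\mathcal A}'')\to N\to 0$,
\[
P(D({\mathcal A}))=t\,P(D({\mathcal A}'))+P(D({\mathcal A}''))-P(N),
\]
whose numerators telescope under the prescribed exponents; this pins down the candidate exponents as the only possibility and, a posteriori, forces $P(N)=0$ once the third module is known to be free with those exponents.

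I would carry out the constructive (Addition) implication $(2)\wedge(3)\Rightarrow(1)$ first, as the representative case. Fix a basis $\delta_1,\dots,\delta_n$ of $D({\mathcal A}')$ with $\deg\delta_i=b_i$ for $i<n$ and $\deg\delta_n=b_n-1$, chosen so that $\delta_n$ is transverse to $H$, and a basis $\bar\epsilon_1,\dots,\bar\epsilon_{n-1}$ of $D({\mathcal A}'')$ with $\deg\bar\epsilon_i=b_i$. Lift each $\bar\epsilon_i$ to $\epsilon_i\in D({\mathcal A})$ with $\rho(\epsilon_i)=\bar\epsilon_i$ and $\deg\epsilon_i=b_i$, and adjoin $H\delta_n\in D({\mathcal A})$, of degree $b_n$. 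Because $|{\mathcal A}|=|{\mathcal A}'|+1=\sum_{i<n}b_i+b_n$, the degrees of $\epsilon_1,\dots,\epsilon_{n-1},H\delta_n$ sum to $|{\mathcal A}|$, so by Saito it suffices to check independence over $S$. Pulling the factor $H$ out of the last column, one checks instead that $\epsilon_1,\dots,\epsilon_{n-1},\delta_n$ are independent in the free module $D({\mathcal A}')$; reducing the coefficient matrix modulo $H$ sends the $\epsilon_i$ to the independent $\bar\epsilon_i$ and leaves $\delta_n$ transverse to $H$, so the reduced $n\times n$ determinant is nonzero. Hence $\det[\epsilon_1,\dots,\epsilon_{n-1},H\delta_n]$ is a nonzero multiple of $H\,Q_{{\mathcal A}'}=Q_{{\mathcal A}}$, and ${\mathcal A}$ is free with exponents $b_1,\dots,b_n$. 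The Deletion and Restriction implications run through the same sequence: from a basis of $D({\mathcal A})$ together with $H\delta_n$ one reads off either a basis of $D({\mathcal A}')$ or, via $\rho$, a basis of $D({\mathcal A}'')$, the extra or missing exponent being exactly the shift $b_n\leftrightarrow b_n-1$ dictated by multiplication by $H$, and the Poincaré identity fixes the remaining exponents.

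The main obstacle is the liftability step: producing $\epsilon_i\in D({\mathcal A})$ with $\rho(\epsilon_i)=\bar\epsilon_i$, equivalently surjectivity of $\rho$ in the relevant degrees, i.e. the vanishing of $N=\coker\rho$. This cannot be read off from the Poincaré identity alone, since that identity carries both $P(D({\mathcal A}''))$ and $P(N)$ as unknowns; it must instead be established from the freeness hypotheses by a local analysis along $H$, showing that an arbitrary coordinate lift of $\bar\epsilon_i$ can be corrected, using the basis of the free module $D({\mathcal A}')$, to a derivation tangent to all of ${\mathcal A}$ without altering its restriction. Once surjectivity is in hand, the determinant computation above upgrades the numerical match furnished by the Poincaré identity to an actual $S$-basis, which is the entire content of the theorem.
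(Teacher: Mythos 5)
The paper does not actually prove this statement: it is quoted from Terao \cite{t2} (it is Theorem 4.46 in Orlik--Terao's book), so the only available comparison is with the classical proof. Your skeleton is the right one, and it is the classical one: the left exact sequence of the triple, Saito's criterion in the degree form (homogeneous derivations in $D({\mathcal A})$, independent over $S$, with degrees summing to $|{\mathcal A}|$, form a basis), and the determinant computation showing that $\epsilon_1,\dots,\epsilon_{n-1},H\delta_n$ is a basis of $D({\mathcal A})$ once the lifts $\epsilon_i$ exist and $\delta_n$ is transverse to $H$. That computation (pulling $H$ out of the last row, reducing mod $H$, and invoking Saito's criterion on $H$ for the basis $\bar\epsilon_i$ of $D({\mathcal A}'')$) is correct as far as it goes.

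The problem is that the proof stops exactly where the theorem begins, and flagging the gap is not the same as filling it. The existence of homogeneous lifts $\epsilon_i\in D({\mathcal A})_{b_i}$ with $\rho(\epsilon_i)=\bar\epsilon_i$ --- equivalently, surjectivity of $\rho$ onto the generators of $D({\mathcal A}'')$ --- is the entire content of Terao's theorem. The map $\rho$ is \emph{not} surjective for a general triple, so no formal ``local analysis along $H$'' can produce the lifts: if you take an arbitrary lift $\tilde\epsilon_i$ tangent to $H$ and try to correct it by an element $H\eta$ of $\ker\rho$, you need $\eta(\alpha_{H'})\equiv h_{H'}\pmod{\alpha_{H'}}$ simultaneously for every $H'\in{\mathcal A}'$, and solving this system of congruences is precisely where the freeness hypotheses must enter globally; this is the substance of Proposition 4.45 and the adjacent lemmas in Orlik--Terao, a genuine page of work, not a correction procedure. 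Two further gaps of the same nature: (i) you \emph{assume} a basis of $D({\mathcal A}')$ can be ``chosen so that $\delta_n$ is transverse to $H$''; one only knows that \emph{some} basis element is transverse (else $D({\mathcal A}')=D({\mathcal A})$, impossible by degree of the determinant), and if the transverse elements all have degree exceeding $b_n-1$ you cannot repair $\delta_n$ by $S$-linear column operations --- in the classical argument the transversality of the degree-$(b_n{-}1)$ element is a conclusion of the construction, not a choice; (ii) the deletion and restriction implications are not the same sequence read backwards: for $(1)\wedge(2)\Rightarrow(3)$ your Poincar\'e identity, having both $P(D({\mathcal A}''))$ and $P(N)$ as unknowns, only identifies the Hilbert series of $\mathrm{im}\,\rho$, so you again need surjectivity before Saito's criterion on $H$ applies, and for $(1)\wedge(3)\Rightarrow(2)$ you must show a basis of $D({\mathcal A})$ can be rearranged so that $n-1$ of its members restrict to a basis of $D({\mathcal A}'')$ and the last lies in $H\cdot D({\mathcal A}')$ --- the lifting problem once more. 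As written, the proposal reduces the theorem to its hardest step and then asserts that step.
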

For a triple with ${\mathcal A} \subseteq \mathbb{P}^2$, \cite{cmh} shows
that after pruning the Euler derivations and sheafifying, there is an exact sequence
\[
0 \longrightarrow {\mathcal D_0}'(-1) \longrightarrow {\mathcal D_0}
  \longrightarrow i_*{\mathcal D_0}'' \longrightarrow 0,
\]
where $i: H \hookrightarrow \mathbb{P}^2$; $i_*{\mathcal D_0}''\simeq {\mathcal O}_H(1-|{\mathcal A}''|).$
In \cite{cmh2}, a version of this exact sequence (and associated addition-deletion theorem)
is shown to hold for arrangements of lines and conics in $\p^2$ having all singularities 
quasihomogeneous. In related work, Dimca-Sticlaru study the
Milnor algebra of nodal curves in \cite{ds}. 
\subsection{Statement of results}
This paper is motivated by the following example:
\begin{exm}\label{firstex}
The braid arrangement $\operatorname{A}_3$ is depicted below;
\begin{figure}[ht]
\begin{center}
\epsfig{file=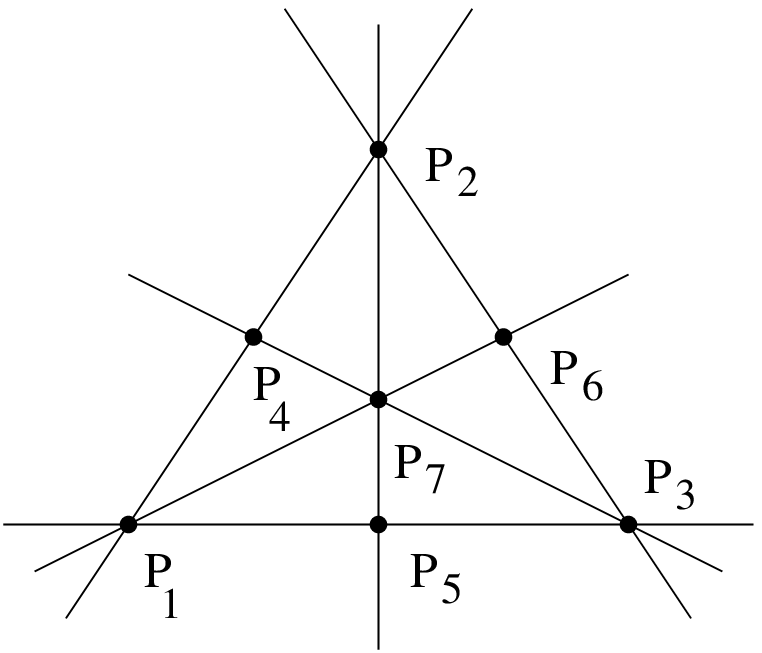,height=1.6in,width=1.6in}
\end{center}
\caption{\textsf{The $\operatorname{A}_3$-arrangement}}
\end{figure}
$D(\operatorname{A}_3)$ is free and is \newline isomorphic to $S(-1) \oplus S(-2)\oplus S(-3)$. 
There is a three dimensional family of cubics passing through the seven singular points
of $\operatorname{A}_3$. A generic member $C$ of this family is smooth, and 
$D(\operatorname{A}_3 \cup C) \simeq S(-1) \oplus S(-4)\oplus S(-4).$ 
Our main result is Theorem~\ref{main}, which combined with Proposition~\ref{cmain} explains this example.
\end{exm}
\begin{thm}\label{main}
Let ${\mathcal A}= \bigcup_{i=1}^r C_i$ and 
 ${\mathcal A} \cup C$ be collections of 
smooth curves in $\p^2$, such that all singular points
of ${\mathcal A}$ and ${\mathcal A} \cup C$ are quasihomogeneous. Then 
\[
0 \longrightarrow \Der(\log {\mathcal A})(-n) 
\longrightarrow \Der(\log {\mathcal A} \cup C) \longrightarrow \mathcal{O}_{C}(-K_C -R) \longrightarrow 0
\]
is exact, where $R = ({\mathcal A} \cap C)_{red}$ is the reduced 
scheme of $C \cap {\mathcal A}$ and $\deg(C) = n$. 
\end{thm}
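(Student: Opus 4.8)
The plan is to realize the sequence as an elementary modification of $\Der(\log {\mathcal A}\cup C)$ along $C$, with first map ``multiplication by a defining equation of $C$'' and third map ``restriction of logarithmic fields to $C$.'' Fix homogeneous defining equations $f$ (degree $d$) for ${\mathcal A}$ and $g$ (degree $n$) for $C$, so that $F=fg$ defines ${\mathcal A}\cup C$. First I would check that $\phi\colon\theta\mapsto g\theta$ carries $\Der(\log{\mathcal A})$ into $\Der(\log{\mathcal A}\cup C)$: if $\theta(f)=hf$, then by the Leibniz rule $(g\theta)(F)=g(\theta(f)g+f\theta(g))=F(hg+\theta(g))\in\langle F\rangle$, and the shift by $-n$ records $\deg g=n$. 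This $\phi$ is injective because $\Der(\log{\mathcal A})$ is torsion free (indeed a rank two bundle, as explained in the introduction) and $g\neq 0$. It then remains to identify the cokernel $Q:=\coker\phi$ with $\OO_C(-K_C-R)$.

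To do this I would construct a map $\rho\colon\Der(\log{\mathcal A}\cup C)\to\OO_C(-K_C-R)$, check $\rho\circ\phi=0$ so that $\rho$ descends to $\bar\rho\colon Q\to\OO_C(-K_C-R)$, and then prove $\bar\rho$ is an isomorphism. A section $\theta$ is logarithmic along the smooth component $C$, hence tangent to $C$, so it acts on $\OO_C$ and restricts to a section $\theta|_C$ of $T_C\cong\OO_C(-K_C)$. Writing $h_{{\mathcal A}}$ for a local equation of ${\mathcal A}$, the relation $\theta(h_{{\mathcal A}})\in\langle h_{{\mathcal A}}\rangle$ restricts to $\theta|_C(h_{{\mathcal A}}|_C)\in\langle h_{{\mathcal A}}|_C\rangle$, so $\theta|_C$ is logarithmic along the divisor $({\mathcal A}\cap C)$ on $C$; since a logarithmic field along a divisor on a smooth curve vanishes at the support of that divisor, $\theta|_C$ lies in $T_C(-R)=\OO_C(-K_C-R)$. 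As $g|_C=0$ we get $\rho\circ\phi=0$, giving $\bar\rho$.

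Because exactness of a sheaf sequence is local, I would verify that $\bar\rho$ is an isomorphism stalk by stalk. Off $\Sing({\mathcal A}\cup C)$ this is a direct computation: in coordinates $(u,v)$ with $C=\{v=0\}$ and $g=v$, a field restricting to zero on $C$ has both coefficients divisible by $v=g$, hence equals $g\theta$, and $(g\theta)(F)\in\langle F\rangle$ together with $\gcd(f,g)=1$ forces $\theta(f)\in\langle f\rangle$, so $\theta\in\Der(\log{\mathcal A})$; meanwhile $\partial_u$ shows $\rho$ is onto $T_C$ there. Thus $\bar\rho$ is a local isomorphism on the smooth locus, and only the finitely many singular points on $C$ remain.

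The crux, and the place where quasihomogeneity is essential, is the local analysis at $p\in C\cap\Sing({\mathcal A}\cup C)$. Every reduced plane curve germ is a free divisor (Saito), so $\Der(\log{\mathcal A})_p$ and $\Der(\log{\mathcal A}\cup C)_p$ are free of rank two with Saito determinants a unit times $f$ and a unit times $F$ respectively; hence $Q_p$ is supported on $C$ with the correct total multiplicity, and the one danger is embedded torsion, which would make $\bar\rho$ fail to be surjective. I would rule this out using $\mu=\tau$: by K.\ Saito's criterion a quasihomogeneous singularity satisfies $h\in J_h$ and admits a weighted Euler field $\chi$ with $\chi(h)=h$. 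This $\chi$ provides the generator of $\Der(\log{\mathcal A}\cup C)_p$ complementary to $g\,\Der(\log{\mathcal A})_p$ whose restriction generates $(\OO_C(-K_C-R))_p$, pinning $Q_p$ down to the cyclic module $\OO_{C,p}$ and yielding a local isomorphism. It is precisely the equality $\mu=\tau$ that forbids the extra torsion; I expect this local identification to be the main obstacle, since when quasihomogeneity fails one anticipates $Q$ to acquire embedded points and the sequence to break. Granting it at every point, $\bar\rho$ is an isomorphism and the sequence is exact. As a consistency check one can run the Chern class bookkeeping on \exref{firstex}: with ${\mathcal A}=\mathrm{A}_3$ and $C$ a smooth cubic one computes $\chi(Q)=-7=\deg(-K_C-R)$, matching $\deg R=7$ from the seven singular points of $\mathrm{A}_3$ lying on $C$.
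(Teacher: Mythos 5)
Your proposal is sound in outline and takes a genuinely different route from the paper. The paper argues globally: it forms the $3\times 3$ snake-lemma diagram built from the defining sequences of $\Der(\log {\mathcal A})(-n)$ and $\Der(\log {\mathcal A}\cup C)$ inside $\TT_{\p^2}$, observes that the cokernel of multiplication by the equation of $C$ is a torsion-free, hence locally free, $\OO_C$-module, and then pins down its rank and degree by a Hilbert polynomial computation (Proposition~\ref{HP}); that computation is where quasihomogeneity enters for the paper, via $\deg(J_Q)=\sum_p\mu_p(Q)$ and Wall's formula for Milnor numbers of unions. The final identification is a degree count: the cokernel injects into $\TT_C(-R)$ and both sheaves have degree $3n-n^2-k$. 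You instead identify the cokernel stalk by stalk through the restriction map $\rho(\theta)=\theta|_C$, with quasihomogeneity entering through Saito's local Euler field. Your route dispenses with all Hilbert polynomial bookkeeping and makes visible exactly where quasihomogeneity is used; its cost is that the entire weight of the proof falls on the local analysis at singular points, which you only sketch.

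That local analysis has two soft spots. First, your diagnosis that ``the one danger is embedded torsion'' and that $\mu=\tau$ is what forbids it is misplaced: torsion in $Q_p$ would obstruct injectivity of $\bar\rho$, and in fact it cannot occur regardless of quasihomogeneity. Writing $g\,(\theta_1,\theta_2)=(\eta_1,\eta_2)M$ for Saito bases $(\theta_i)$ of $\Der(\log{\mathcal A})_p$ and $(\eta_i)$ of $\Der(\log{\mathcal A}\cup C)_p$, comparing determinants gives $\det M=(\mathrm{unit})\cdot g$; since $C$ is smooth, $g$ has order one at $p$, so some entry of $M$ is a unit and $Q_p\cong\OO_{\p^2,p}/(\det M)\cong\OO_{C,p}$ is automatically cyclic and torsion-free over $\OO_{C,p}$. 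What quasihomogeneity genuinely buys is surjectivity of $\bar\rho_p$: without it, every logarithmic field may restrict to $C$ with a zero of order at least two at $p$, and the cokernel is then $\OO_C(D)$ with $\deg D<\deg(-K_C-R)$ --- the sequence still exists but with the wrong third term, which is exactly how the theorem fails in the non-quasihomogeneous case. Your Euler-field argument addresses precisely this surjectivity, so your fix is the right one even though your stated rationale is not. Second, the crux --- that $\chi|_C$ vanishes to order exactly one at $p$, hence generates $T_C(-R)_p$ --- is asserted without proof. It is true and not hard: with positive weights $(\alpha,\beta)$, a smooth branch of a weighted homogeneous germ is a coordinate axis or a graph $v=cu^k$, and in each case $\chi=\alpha u\partial_u+\beta v\partial_v$ restricts to a nonzero constant times $u$ times a generating section of $T_C$. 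That verification, together with running your divisibility-plus-gcd kernel argument at the singular points (it works there verbatim, though you state it only on the smooth locus), is what must be written out for your proof to be complete.
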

\noindent We prove the theorem in \S 3. For addition-deletion arguments, we will need
\begin{prop}\label{cmain}
Suppose $0 \longrightarrow A \longrightarrow B \longrightarrow C \longrightarrow 0$ 
is an exact sequence of graded $S=k[x_0,\ldots,x_n]$--modules, with $A$ and $B$ of rank two 
and projective dimension at most one. Then any two of the following imply the third
\begin{enumerate}
\item $A$ is free with generators in degrees $\{a,b\}$.
\item $B$ is free with generators in degrees $\{c,d\}$.
\item $C$ has Hilbert series $\frac{t^c+t^d-t^{a}-t^{b}}{(1-t)^{n+1}}$.
\end{enumerate}
\end{prop}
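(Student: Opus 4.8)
The plan is to squeeze everything out of additivity of Hilbert series and then reduce each of the three implications to a single freeness statement. Since Hilbert series are additive over the short exact sequence, $\HS(B,t)=\HS(A,t)+\HS(C,t)$, so knowing any two of the three series determines the third. For the implication $(1)+(2)\Rightarrow(3)$ this is all that is needed: substituting $\HS(A,t)=(t^a+t^b)/(1-t)^{n+1}$ and $\HS(B,t)=(t^c+t^d)/(1-t)^{n+1}$ gives $\HS(C,t)=(t^c+t^d-t^a-t^b)/(1-t)^{n+1}$ at once. I would also record the rank count: taking ranks in the sequence forces $\rank C=\rank B-\rank A=0$, so $C$ is automatically a torsion module, which is consistent with the intended application, where $C=\mathcal{O}_C(-K_C-R)$ is a line bundle on the curve and hence supported in codimension one.

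For the two remaining implications the same additivity pins down the Hilbert series of the unknown module: in $(2)+(3)\Rightarrow(1)$ one computes $\HS(A,t)=(t^a+t^b)/(1-t)^{n+1}$, and in $(1)+(3)\Rightarrow(2)$ one computes $\HS(B,t)=(t^c+t^d)/(1-t)^{n+1}$. Thus both reduce to the assertion that a graded module $N$ of rank two and projective dimension at most one whose Hilbert series is $(t^p+t^q)/(1-t)^{n+1}$ is free, $N\cong S(-p)\oplus S(-q)$. I would attack this through a minimal graded free resolution $0\to G_1\stackrel{\phi}{\to}G_0\to N\to 0$, where $\rank G_0-\rank G_1=\rank N=2$. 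Comparing numerators of Hilbert series shows that the multiset of twists occurring in $G_0$ equals the multiset of twists in $G_1$ together with $\{p,q\}$; equivalently $G_0\cong S(-p)\oplus S(-q)\oplus G_1$, and $\phi$ then contains a square block $G_1\to G_1$ all of whose entries lie in $\mathfrak{m}$ by minimality. The entire content of the lemma is to prove that this forces $G_1=0$.

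The main obstacle is exactly ruling out such a \emph{phantom} repeated summand: the numerical data alone permit a nonzero $G_1$ mapping into the copy of $G_1$ inside $G_0$ by a matrix with entries in $\mathfrak{m}$, so freeness cannot follow from the Hilbert series and the bound $\pdim N\le 1$ by bookkeeping alone. To close the gap I would dualize. Applying $\Hom(-,S)$ to $0\to A\to B\to C\to 0$, using $C^{*}=0$ (as $C$ is torsion), gives the key comparison of Ext modules; the crux is to prove the vanishing $\Ext^{1}(N,S)=0$ for the module $N$ one wants to be free. Granting this, the payoff is immediate and clean: dualizing the length-one resolution $0\to G_1\to G_0\to N\to 0$ makes $G_0^{*}\to G_1^{*}$ a surjection, so $0\to N^{*}\to G_0^{*}\to G_1^{*}\to 0$ splits, $N^{*}$ is free, and re-dualizing collapses the resolution to show $N\cong N^{**}$ is free. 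Establishing that $\Ext^{1}$-vanishing is therefore where the real work lies; I expect it to rest on the codimension-one purity of the torsion quotient $C$, i.e. the vanishing of its higher Ext modules against $S$, which is precisely the feature guaranteed in the geometric setting of \thmref{main}, where $C$ is a line bundle on a smooth curve. Extracting this purity from the hypotheses, and thereby controlling the otherwise-permitted phantom summand, is the step I anticipate being the hardest part of the argument.
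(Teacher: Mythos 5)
Your reduction is faithful to what is needed: $(1)+(2)\Rightarrow(3)$ is additivity, and the two remaining implications both reduce to the claim that a graded module $N$ of rank two with $\pdim N\le 1$ and Hilbert series $(t^p+t^q)/(1-t)^{n+1}$ is free. Your ``payoff'' step is also sound: if $\Ext^1(N,S)=0$ then the dual of $0\to G_1\to G_0\to N\to 0$ exhibits $G_0^*\to G_1^*$ as a surjection onto a free module, hence $G_1\to G_0$ is a split injection and $N$ is free. The genuine gap is that the crux, $\Ext^1(N,S)=0$, is never proved but only deferred to a hoped-for purity of $C$ --- and that route cannot be completed, because the lemma you reduced to is false. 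Over $S=k[x,y,z]$ let $C'=\coker\bigl(S(-1)\xrightarrow{(x,y)^T}S^2\bigr)$ and $B=S(-1)\oplus C'$. Then $B$ has rank two, $\pdim B=1$, and Hilbert series $2/(1-t)^3$, which is of the required shape with $c=d=0$; yet $B$ is not free, and indeed $\Ext^1(B,S)\cong\bigl(S/(x,y)\bigr)(1)\neq 0$. Worse, the Proposition itself fails on this example: writing $\bar u_1\in C'$ for the image of $(1,0)$, the submodule $A=S(-1)\oplus S\bar u_1\cong S\oplus S(-1)$ is free and there is an exact sequence
\[
0 \longrightarrow S(-1)\oplus S\bar u_1 \longrightarrow S(-1)\oplus C' \longrightarrow S/(y) \longrightarrow 0,
\]
so (1) and (3) hold (with $\{a,b\}=\{0,1\}$, $c=d=0$, since $S/(y)$ has Hilbert series $(1+1-1-t)/(1-t)^3$), while (2) fails. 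Note that $C=S/(y)$ is Cohen--Macaulay of codimension one and is the section module of a line bundle on a smooth plane curve: exactly the purity you proposed to exploit holds here, and it does not force $\Ext^1(B,S)=0$.

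The comparison with the paper is instructive, because the paper's own proof is precisely the bookkeeping you distrusted: minimality forces every summand of $F_0$ of degree $\ge e_{\max}=\max\{e_i\}$ to receive only zero entries, so a free rank-one summand splits off $B$ (this part is correct, and sharper than your ``square block'' observation), but the proof then concludes with ``since $B$ has rank two this forces $B$ to be free,'' which is exactly what $B=S(-1)\oplus C'$ refutes. So your instinct that Hilbert series plus $\pdim\le 1$ cannot force freeness is vindicated; the statement is missing a hypothesis. What makes it true in the intended application is that there $A$ and $B$ are section modules of rank-two bundles on $\p^2$, i.e.\ reflexive modules. Adding ``$A$ and $B$ reflexive'' repairs both arguments at once: a direct summand of a reflexive module is reflexive, and a rank-one reflexive graded module over the UFD $S$ is free, so once $S(-e_{\max})$ splits off, the rank-one complement is free and $B$ is free, contradicting $\pdim B=1$. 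If you prefer your duality formulation, the input you must feed it is this reflexivity (equivalently, that $B$ sheafifies to a vector bundle), not any property of the torsion quotient $C$.
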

\begin{proof}
That (1) and (2) imply (3) is trivial. If (1) and (3) hold and $B$ is 
not free, then $\pdim(B)=1$, so $B$ has a minimal
free resolution of the form
\[
0 \longrightarrow F_1 \longrightarrow F_0 \longrightarrow B  \longrightarrow 0.
\]
Note that 
$F_1 = \oplus S(-e_i)$ and $F_0 =  \oplus S(-e_i) \oplus S(-c) \oplus S(-d)$,
since by additivity the Hilbert series of $B$ is 
$\frac{t^c+t^d}{(1-t)^{n+1}}$. Without loss of generality suppose $d \ge c$. 
If the largest $e_i > d$, then since the resolution is minimal, no term of
$F_1$ can map to the generator in degree $e_i$, which forces $B$ to have a
free summand of rank one. Since $B$ has rank two this forces $B$ to be free.
Next suppose the largest $e_i = d$. Since the Hilbert series of $B$ is 
$\frac{t^c+t^d}{(1-t)^{n+1}}$, $B$ has a minimal generator of degree $d$.
But then no element of $F_1$ can be a relation involving that generator,
and again $B$ has a free summand. This obviously also works when the 
largest $e_i$ is less than $d$, and shows that  (1) and (3) imply (2). 
The argument that (2) and (3) imply (1) is similar.
\end{proof}
If a bundle of the form $\Der(\log {\mathcal A})$ splits as a sum of line bundles (is free), then 
applying $\Gamma_*$ to the short exact sequence of Theorem~\ref{main} 
yields a short exact sequence of modules, since 
$H^1(\oplus \OO_{\p^2}(a_i)) = 0$. Then by Proposition~\ref{cmain}, on
$\p^2$ the freeness of $\Der(\log {\mathcal A} \cup C)$ follows if 
appropriate numerical conditions hold. In contrast to arrangements of 
rational curves (where the Hilbert series of $\Gamma_*(\OO_C(-K_C-R))$ 
depends only on the degree of $R$, since $C \simeq \mathbb{P}^1$), 
for curves of positive genus 
the Hilbert series of $\Gamma_*(\OO_C(-K_C-R))$ depends on subtle geometry.
\section{Quasihomogeneous plane curves}
Let $C=V(Q)$ be a reduced (but not necessarily irreducible) curve in $\mathbb{C}^2$, let
$(0,0) \in C$, and let $\mathbb{C}\{x,y\}$ denote the ring of 
convergent power series.
\begin{defn}
The Milnor number of $C$ at $(0,0)$ is 
\[
\mu_{(0,0)}(C) = \dim_\mathbb{C}\mathbb{C}\{x,y\}/\langle\frac{\partial f}{\partial
x}\mbox{, }\frac{\partial f}{\partial y}\rangle.
\]
\end{defn}
\noindent To define $\mu_p$ for an arbitrary point $p$, we translate so
that $p$ is the origin. 
\begin{defn}
The Tjurina number of $C$ at $(0,0)$ is 
\[
\tau_{(0,0)}(C) = \dim_{\C} \C \{x,y\}/\langle\frac{\partial f}{\partial
x}\mbox{, }\frac{\partial f}{\partial y}\mbox{, }f\rangle.
\]
\end{defn}
\begin{defn}
A singularity is quasihomogeneous iff there exists a holomorphic 
change of variables so the defining equation becomes weighted
homogeneous; $f(x,y) = \sum c_{ij}x^{i} y^{j}$ is
weighted homogeneous if there exist rational numbers $\alpha, \beta$
such that $\sum c_{ij}x^{i \cdot \alpha} y^{j \cdot \beta}$ is homogeneous.
\end{defn}
In \cite{s}, Saito shows that if $f$ is a convergent power series
with isolated singularity at the origin, then $f$ is in the ideal
generated by the partial derivatives if and only if $f$ is
quasihomogeneous. As noted in \S 1.3 of \cite{cmh2}, 
if all the singular points of $V(Q) \subseteq \p^2$ are quasihomogeneous, then
\[
\text{deg}(J_Q) = \!\!\!\!\sum\limits_{p \in \text{Sing}(V(Q))}\!\!\!\!\mu_p(Q).
\]
%We shall need the following lemma:
\begin{lem}\label{lem:ADmilnors}$[$\cite{w}, Theorem 6.5.1$]$
Let $X$ and $Y$ be two reduced plane curves with no common component, 
meeting at a point $p$. 
Then 
\[
\mu_p(X \cup Y) = \mu_p(X) + \mu_p(Y) +2(X \cdot Y)_p -1,
\]
where $(X \cdot Y)_p$ is the intersection number of $X$ and $Y$ at $p$.
\end{lem}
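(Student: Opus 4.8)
The plan is to reduce the formula to two classical ingredients: Milnor's relation $\mu_p(C) = 2\delta_p(C) - r_p(C) + 1$ between the Milnor number, the delta-invariant $\delta_p(C) = \dim_\mathbb{C} \overline{\mathcal{O}}/\mathcal{O}$ (with $\mathcal{O} = \mathcal{O}_{C,p}$ and $\overline{\mathcal{O}}$ its normalization), and the number $r_p$ of local analytic branches; together with the additivity of $\delta_p$ under union. Writing $f,g \in \mathbb{C}\{x,y\}$ for local equations of $X$ and $Y$ at $p$, so that $fg$ is a local equation for $X \cup Y$, the hypotheses (no common component, and each germ having an isolated singularity) guarantee that all invariants below are finite. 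The arithmetic is then immediate, so the content of the proof sits entirely in the two additivity statements.

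First I would record the branch count: the analytic branches of $X \cup Y$ at $p$ are exactly those of $X$ together with those of $Y$, whence $r_p(X \cup Y) = r_p(X) + r_p(Y)$. Next comes the main computation, the additivity of $\delta_p$. Since the normalization of $X \cup Y$ is the disjoint union of the normalizations of $X$ and $Y$, one has $\overline{\mathcal{O}}_{X \cup Y} = \overline{\mathcal{O}}_X \oplus \overline{\mathcal{O}}_Y$. I would then invoke the gluing exact sequence
\[
0 \longrightarrow \mathcal{O}_{X\cup Y} \longrightarrow \mathcal{O}_X \oplus \mathcal{O}_Y \longrightarrow \mathcal{O}_{X \cap Y} \longrightarrow 0,
\]
in which the right-hand map is the difference of restrictions and $\mathcal{O}_{X \cap Y} = \mathbb{C}\{x,y\}/\langle f,g\rangle$ has dimension $(X\cdot Y)_p$ by definition of the intersection number (injectivity of the left map uses $\langle f\rangle \cap \langle g\rangle = \langle fg\rangle$, valid since $\mathbb{C}\{x,y\}$ is a UFD and $f,g$ are coprime). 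Filtering through the tower $\mathcal{O}_{X\cup Y} \subseteq \mathcal{O}_X \oplus \mathcal{O}_Y \subseteq \overline{\mathcal{O}}_{X\cup Y}$ and adding dimensions yields $\delta_p(X \cup Y) = \delta_p(X) + \delta_p(Y) + (X\cdot Y)_p$.

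Finally, substituting both identities into Milnor's formula for $X \cup Y$ and expanding $\mu_p(X), \mu_p(Y)$ by the same formula, the $\delta$-terms contribute $2(X\cdot Y)_p$, the branch counts cancel, and collecting the three additive constants $+1$ produces exactly the claimed $-1$. The hard part is not this bookkeeping but the input $\mu = 2\delta - r + 1$: a self-contained proof requires the Milnor fibration (whose fiber is homotopy equivalent to a wedge of $\mu$ circles) together with a normalization computation of its Euler characteristic, so in practice I would cite it. An alternative route that avoids Milnor's formula is to compute $\mu_p(X\cup Y) = \dim_\mathbb{C} \mathbb{C}\{x,y\}/\langle (fg)_x,(fg)_y\rangle$ directly, using the identity $f_y(fg)_x - f_x(fg)_y = -f\,(f_xg_y - f_yg_x)$ and its analogue for $g$ to relate this colength to those of $\langle f_x,f_y\rangle$, $\langle g_x,g_y\rangle$, and $\langle f,g\rangle$; there the difficulty shifts to controlling the relevant intersection multiplicities, and isolating the single unit of overlap responsible for the $-1$ becomes the delicate point.
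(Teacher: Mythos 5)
The first thing to know is that the paper contains no proof of this lemma: the bracketed citation $[$\cite{w}, Theorem 6.5.1$]$ is the entire justification, so there is no in-paper argument to compare against, and your write-up supplies exactly what the authors delegate to Wall's book. Your proof is correct. The branch count $r_p(X\cup Y)=r_p(X)+r_p(Y)$ is legitimate because the no-common-component hypothesis forces the local equations $f,g$ to be coprime in $\mathbb{C}\{x,y\}$ (otherwise $(X\cdot Y)_p=\dim_{\mathbb{C}}\mathbb{C}\{x,y\}/\langle f,g\rangle$ would be infinite), so $X$ and $Y$ share no analytic branch; the gluing sequence
\[
0 \longrightarrow \mathbb{C}\{x,y\}/\langle fg\rangle \longrightarrow \mathbb{C}\{x,y\}/\langle f\rangle \oplus \mathbb{C}\{x,y\}/\langle g\rangle \longrightarrow \mathbb{C}\{x,y\}/\langle f,g\rangle \longrightarrow 0
\]
is exact for precisely the reasons you give (injectivity from $\langle f\rangle\cap\langle g\rangle=\langle fg\rangle$, middle exactness by the Chinese-remainder argument), and together with $\overline{\mathcal{O}}_{X\cup Y}\cong\overline{\mathcal{O}}_X\oplus\overline{\mathcal{O}}_Y$ it yields $\delta_p(X\cup Y)=\delta_p(X)+\delta_p(Y)+(X\cdot Y)_p$; the arithmetic with $\mu=2\delta-r+1$ then checks out, including the $-1$. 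Citing Milnor's formula rather than reproving it is entirely reasonable here, since it is the one genuinely deep (topological) input and the paper cites even more. One remark on your closing paragraph: the alternative route you sketch is less delicate than you fear if organized through polar curves rather than through the full Jacobian ideal. In coordinates generic for $f$, $g$, and $fg$, Teissier's lemma gives $\iota(h,h_y)=\mu(h)+m(h)-1$ for a reduced germ $h$ of multiplicity $m(h)$; reducing $(fg)_y=f_yg+fg_y$ modulo $f$ and modulo $g$ gives $\iota(fg,(fg)_y)=\iota(f,f_y)+\iota(g,g_y)+2\,\iota(f,g)$, and since $m(fg)=m(f)+m(g)$, the multiplicity terms cancel and the single $-1$ falls out mechanically. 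That intersection-theoretic argument (which is in the spirit of the source the paper cites) trades your topological input for a genericity statement about polars; your version has the merit that the additivity step is purely algebraic.
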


\begin{prop}\label{HP}
 Let $\mathcal A$ and $\mathcal A \cup C$ be quasihomogeneous,
with $C=V(f)$ of degree $n$ and $\mathcal A = V(Q)$ with $Q$ of degree $m$. 
Then the Hilbert polynomial $\HP(coker(f),t)$ of the cokernel of the 
multiplication map
\[
 0\longrightarrow D(\mathcal A)(-n)/E \stackrel{\cdot f}{\longrightarrow} D(\mathcal A \cup C)/E
\]
is $nt+3(1-g_C)-n-k$, where $k=|C \cap \mathcal A|$.
\end{prop}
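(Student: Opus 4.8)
The plan is to compute $\HP(\coker(f),t)$ by additivity of Hilbert polynomials. First I would observe that $D(\mathcal A)/E \cong D_0(\mathcal A)$ (quotienting out the rank one free summand generated by the Euler derivation in $D(\mathcal A) \simeq D_0(\mathcal A) \oplus S(-1)$), and that this is a second syzygy module on $\p^2$, hence torsion free; therefore multiplication by the nonzero form $f$ is injective, the displayed sequence extends to a short exact sequence, and
\[
\HP(\coker(f),t) = \HP(D(\mathcal A \cup C)/E,t) - \HP(D(\mathcal A)(-n)/E,t).
\]
So it suffices to compute $\HP(D_0(V(F)),t)$ for a quasihomogeneous curve of degree $e$, apply this with $\mathcal A = V(Q)$ (taking $e=m$) and with $\mathcal A \cup C = V(QF)$ (taking $e=m+n$), and subtract after twisting the first by $-n$.

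For the building block I would read off $\HP(D_0(V(F)),t)$ from the four term exact sequence
\[
0 \longrightarrow D_0(V(F)) \longrightarrow S^{3} \longrightarrow S(e-1) \longrightarrow S(e-1)/J_F \longrightarrow 0
\]
of the introduction. Additivity gives
\[
\HP(D_0(V(F)),t) = 3\binom{t+2}{2} - \binom{t+e+1}{2} + \HP(S(e-1)/J_F,t).
\]
Because $V(F)$ has only isolated quasihomogeneous singularities, $S/J_F$ is a zero dimensional scheme whose length is $\deg(J_F) = \sum_{p} \mu_p(V(F))$; this is precisely where quasihomogeneity enters, via Saito's criterion giving $\tau_p = \mu_p$ so that the length of the Jacobian scheme equals a sum of Milnor numbers. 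The twist by $e-1$ leaves this constant unchanged, so $\HP(S(e-1)/J_F,t) = \sum_p \mu_p(V(F))$.

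Substituting $e=m+n$ and $e=m$ (the latter evaluated at $t-n$) and subtracting, the binomial terms combine to $nt + \tfrac{9n-3n^2}{2} - 2mn - n$, while the Jacobian terms contribute the Milnor number difference $\sum_p \mu_p(\mathcal A \cup C) - \sum_p \mu_p(\mathcal A)$. I would evaluate this difference with \lemref{lem:ADmilnors}: the singular points of $\mathcal A$ lying off $C$ cancel, and at each $p \in \mathcal A \cap C$ one has $\mu_p(\mathcal A \cup C) - \mu_p(\mathcal A) = \mu_p(C) + 2(\mathcal A \cdot C)_p - 1 = 2(\mathcal A \cdot C)_p - 1$ since $C$ is smooth. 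Summing and using Bezout, $\sum_p (\mathcal A \cdot C)_p = mn$, so the difference is $2mn - k$. The $\pm 2mn$ terms cancel, leaving $nt + \tfrac{9n-3n^2}{2} - n - k$, which equals $nt + 3(1-g_C) - n - k$ after substituting the genus $g_C = \binom{n-1}{2}$.

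The binomial bookkeeping is routine; the step needing genuine care is the Milnor number comparison. One must check that \lemref{lem:ADmilnors} applies at \emph{every} point of $\mathcal A \cap C$ (using that $\mathcal A$ and $C$ share no component and that $C$ is smooth, so $\mu_p(C)=0$), and one must be careful that it is the quasihomogeneity hypothesis that licenses replacing the length of the Jacobian scheme by $\sum_p \mu_p$ throughout, so that the length computation and the addition formula refer to the same invariant.
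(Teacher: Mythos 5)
Your proposal is correct and follows essentially the same route as the paper: both compute $\HP(D_0,t)$ from the four-term exact sequence $0 \to D_0 \to S^3 \to S(e-1) \to S(e-1)/J_F \to 0$, invoke quasihomogeneity (Saito's criterion) to identify $\deg(J_F)$ with $\sum_p \mu_p$, and then evaluate the difference $\deg(J_{A\cup C}) - \deg(J_A) = 2mn-k$ via Lemma~\ref{lem:ADmilnors} and Bezout. Your extra remarks (injectivity of multiplication by $f$ via torsion-freeness, and packaging the two computations into one formula in the parameter $e$) are sound refinements of bookkeeping the paper leaves implicit, but the argument is the same.
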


\begin{proof} By Equation~\ref{DD} and the exact sequences
$$0\longrightarrow D_0(\mathcal A \cup C)\longrightarrow S^3\longrightarrow S(m+n-1)\longrightarrow S(m+n-1)/J_{A \cup C}
\longrightarrow 0,$$ 
$$0\longrightarrow D_0(\mathcal A)(-n)\longrightarrow S^3(-n)\longrightarrow S(m-n-1)\longrightarrow S(m-n-1)/J_{A }
\longrightarrow 0,$$ 
it follows that
\[
\HP(D_0(\mathcal A \cup C),t) = 3{{t+2}\choose{2}}-{{t+1+m+n}\choose{2}}+\deg(J_{A \cup C})
\]
\[
\HP(D_0(\mathcal A)(-n),t) = 3{{t+2-n}\choose{2}}-{{t+1+m-n}\choose{2}}+\deg(J_{A }),
\]
so that $\HP(D_0(\mathcal A \cup C),t)-\HP(D_0(\mathcal A)(-n),t)$ is equal to
\[
\deg(J_{A \cup C})-\deg(J_{A })+nt-n(2m+1)+\frac{3}{2}(3-n)(n).
\]
Since $C$ is smooth of degree $n$, $\frac{3}{2}(3-n)(n) = 3(1-g_C)$.
To compute $\deg(J_{A \cup C})-\deg(J_{A})$, note that since all singularities 
of $\mathcal A$ and $\mathcal A \cup C$ are quasihomogeneous,
\[
\deg(J_{A \cup C}) =\!\!\!\! \sum\limits_{p \in \Sing(A \cup C)}\!\!\!\! \mu_p(\mathcal A \cup C) \mbox{ and }
\deg(J_{A}) =\!\!\!\! \sum\limits_{p \in \Sing(\mathcal A)}\!\!\!\! \mu_{p}(\mathcal A). 
\]
Let $\alpha$ be the sum of Milnor numbers of points off $C$, 
so 
\[
\deg(J_{A \cup C})=\alpha+\sum_{p \in C \cap \mathcal A}\mu_p(\mathcal A \cup C).
\]
Since $\mu_p(C) =0$, by Lemma \ref{lem:ADmilnors}, the previous quantity equals
\[
\alpha+\sum_{p \in C \cap \mathcal A}(\mu_p(\mathcal A)+ 2(C \cdot \mathcal A)_p -1).
\]
As $\deg(J_{A})=\alpha+\sum_{p \in C \cap \mathcal A} \mu_p(\mathcal A)$ and $\mid C \cap
\mathcal A\mid = k$, we obtain:
\[
\deg(J_{A \cup C})-\deg(J_{A})=2\!\!\sum_{p \in C \cap \mathcal A}\!\!(C \cdot \mathcal A)_p -k.
\]
By  Bezout's theorem, 
\[
\sum_{p \in C \cap \mathcal A}(C \cdot \mathcal A)=mn, \mbox{ so }\deg(J_{A \cup C})-\deg(J_{A})=2mn-k,
\]
hence the Hilbert polynomial of the cokernel is 
\[
nt-n(2m+1)+3(1-g_C) +2mn -k  =  nt-n-k+3(1-g_C).
\]
\end{proof}

\section{Main Theorem}
We now prove Theorem~\ref{main}. First we show that 
the sheaf associated to the cokernel of the multiplication map
\[
 0\longrightarrow \Der(\log A)(-n) \stackrel{\cdot f}{\longrightarrow} \Der(\log A \cup C)
\]
is isomorphic to $\mathcal{O}_{C}(D)$, where $D$ is a divisor of degree $3n-n^2-k$. Consider the commuting diagram below
%\xymatrixrowsep{30pt}
%\xymatrixcolsep{30pt}
\[
\xymatrix{ 
          & 0 \ar[d]  & 0 \ar[d]  & 0 \ar[d] \\
 0 \ar[r] & \Der(\log A)(-n)  \ar[r] \ar[d]^{\cdot f}   & \TT_{\p^d}(-n)  \ar[r] \ar[d]^{\cdot f} & J_A(-n) \ar[d]^{\cdot f} \ar[r]& 0\\
 0 \ar[r] & \Der(\log A \cup C)  \ar[r] \ar[d]   & \TT_{\p^d}  \ar[r] \ar[d] & 
J_{A\cup C} \ar[d]\ar[r]& 0\\
0 \ar[r] & \coker(f) \ar[r]\ar[d]  & \TT_{\p^d}/f\cdot \TT_{\p^d} \ar[r]\ar[d] & J_{A\cup C}/J_A(-n) \ar[r]\ar[d] &0\\
  & 0   & 0   & 0
}
\]

Exactness of the top two rows follows from the definition, and since all 
the modules are torsion free, multiplication by $f$ gives an inclusion. 
Exactness of the bottom row then follows from the snake lemma.
The exact sequence
\[
0 \longrightarrow S \longrightarrow S^3(1)  \longrightarrow \Gamma_*(\TT_{\p^2}) \longrightarrow 0,
\]
shows that the Hilbert polynomial of $\Gamma_*(\TT_{\p^2} / f\cdot \TT_{\p^2})$ is $2nt +6n-n^2$. Since 
\[
\TT_{\p^2} / f\cdot \TT_{\p^2} \simeq \TT_{\p^2} \otimes \OO_C,
\]
and $\TT_{\p^2} \otimes \OO_C$ is a locally free rank two $\OO_C$--module, 
$\coker(f)$ is a torsion free submodule of $\TT_{\p^2} \otimes \OO_C$, hence
locally free on $C$, of rank one or two. Since 
\[
\Der(\log \mathcal{A} \cup C) \simeq D_0(\mathcal{A} \cup C)(1),
\]
by Proposition~\ref{HP}, the Hilbert polynomial of $\Gamma_*(\coker(f))$ 
is $nt-k+3(1-g_C)$, so $\coker(f) \simeq \OO_C(D)$. To determine the
degree of $D$, we compute

$\begin{array}{ccc}
\HP(\Gamma_*(\coker(f)), t) & = & h^0( \OO_C(D+tH)), \mbox{ } t\gg 0\\
                            & = & \deg(D+tH)+1-g_C\\
                            & = & \deg(D) + nt + 1 -g_C.
\end{array}$

\noindent Equating this with the previous expression shows 
that $\deg(D) = 2-2g_C -k$. By adjunction,
\[
2g_C-2 = C(C+K_{\p^2}) = nH(nH-3H) = n^2-3n,
\]
so $\deg(D) = 3n-n^2-k$. Notice this shows the left hand column is 
an elementary modification of bundles. To conclude, consider the 
short exact sequence
\[
0 \longrightarrow \TT_C \longrightarrow \TT_{\p^2}\otimes \OO_C \longrightarrow N_{C/\p^2} \longrightarrow 0.
\]
Since $\OO_C(D)$ comes from the restriction of $\Der(\log \mathcal{A} \cup C)$
to $C$, it must actually be a subbundle of $\TT_C$, which by 
adjunction is isomorphic to $\OO_C((3-n)H)$. For the same reason, 
sections must vanish at points of $\mathcal{A} \cap C$, so that
$\OO_C(D) \subseteq  \OO_C((3-n)H -R)$. But the degree of this
last bundle is $3n-n^2-k$, so we have equality, which concludes 
the proof. $\Box$ \newline

%\noindent We can also see how $R$ arises from a local computation: 
%for $p \in C$, $J_{A \cup C, p} =J_{A, p}$
%except when $p \in C \cap \mathcal{A}$. Suppose $p\in U_z$, with
%$\mathcal{A}=V(g)$ and $C=V(f)$. Then 
%\[
%J_{A \cup C, p} = \langle fg_x+gf_x,fg_y+gf_y \rangle =\langle gf_x, gf_y \rang%le\subseteq \OO_C. 
%\]
%Since $C$ is smooth, $f_x$ or $f_y$ is a unit in $\OO_{C,p}$ and 
%so 
%\[
%J_{A \cup C, p} = \langle g \rangle \subseteq \OO_{C,p}.
%\]
\subsection{Castelnuovo-Mumford regularity}
Theorem~\ref{main} yields bounds on the Castelnuovo-Mumford 
regularity of logarithmic vector bundles and, by dualizing, for
logarithmic one forms for quasihomogeneous curve arrangements.
\begin{defn} A coherent sheaf $\mathcal F$ on $\mathbb P^d$ is $j-$regular iff
$H^i\mathcal F(j-i)=0$ for every $i\geq 1$. The smallest number $j$
  such that $\mathcal F$ is $j$-regular is $\reg(\mathcal F)$.
\end{defn}

\begin{lem}\label{lem:regbounds} 
With the hypotheses of Theorem~\ref{main}, 
\[
\reg(\mathcal D_0({\mathcal A \cup C}))\leq \max\{\reg(\mathcal D_0({\mathcal A}))+n, 2n-4+\frac{k}{n}\}.
\]
\end{lem}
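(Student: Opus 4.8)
The plan is to derive the regularity bound from the short exact sequence of Theorem~\ref{main} by applying the long exact sequence in cohomology and controlling the regularity of each of the three terms appearing in it. Recall that regularity bounds propagate through short exact sequences: if $0 \to \mathcal{F}' \to \mathcal{F} \to \mathcal{F}'' \to 0$ is exact, then $\reg(\mathcal{F}) \le \max\{\reg(\mathcal{F}'), \reg(\mathcal{F}'')\}$. I would apply this to the sequence
\[
0 \longrightarrow \Der(\log {\mathcal A})(-n)
\longrightarrow \Der(\log {\mathcal A} \cup C) \longrightarrow \mathcal{O}_{C}(-K_C -R) \longrightarrow 0,
\]
reading off $\reg(\Der(\log {\mathcal A}\cup C))$ in terms of the regularities of the two flanking sheaves. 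Since tensoring by $\OO_{\p^2}(-n)$ shifts regularity by $+n$, the first term contributes $\reg(\mathcal{D}_0({\mathcal A})) + n$, which matches the first entry in the maximum. (Here I am using the identification $\Der(\log {\mathcal A})\simeq \mathcal{D}_0({\mathcal A})(1)$ from Equation~\ref{DD}, so that constant shifts between the logarithmic bundle and $\mathcal{D}_0$ are absorbed; I would state this normalization carefully at the outset.)

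The remaining task is to bound $\reg(\mathcal{O}_C(-K_C - R))$, and I expect this to be the main point. The sheaf lives on the smooth curve $C$ of degree $n$ and genus $g_C$, where $2g_C - 2 = n^2 - 3n$ by adjunction, and by the degree computation in the proof of Theorem~\ref{main} it is a line bundle of degree $3n - n^2 - k$. To bound the regularity of a line bundle $L$ on $C$ viewed as a sheaf on $\p^2$ (via $i_*$, where $i: C \hookrightarrow \p^2$), I would determine for which twists $j$ the groups $H^1(C, L(j-1))$ and $H^2$ vanish; since $C$ is a curve, $H^2(C, -) = 0$ automatically, so only the $H^1$ condition matters. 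By Serre duality on $C$, $H^1(C, L(j-1)) = 0$ as soon as $\deg(L(j-1)) > \deg(K_C) = 2g_C - 2 = n^2 - 3n$, i.e. once $\deg(L) + (j-1)n > n^2 - 3n$. Substituting $\deg(L) = 3n - n^2 - k$ and solving for $j$ gives the threshold $j > 2n - 2 + \frac{k}{n}$, so $H^1$ vanishes for $j \ge 2n - 2 + \frac{k}{n}$; a bookkeeping check on the shift between curve-regularity and the ambient $\p^2$ regularity then produces the second entry $2n - 4 + \frac{k}{n}$ of the maximum.

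The delicate step is getting the numerical shifts exactly right: I must be careful about (i) the degree-$1$ twist built into the Castelnuovo--Mumford definition ($H^i\mathcal{F}(j-i) = 0$), (ii) the twist relating $\Der(\log {\mathcal A}\cup C)$ to $\mathcal{D}_0({\mathcal A}\cup C)$, and (iii) the conversion from the Serre-duality vanishing threshold on $C$ to a regularity bound as a sheaf on $\p^2$, which involves the pushforward $i_*$ and the ideal sheaf of $C$. \emph{The hard part} will be verifying that these three independent shifts combine to yield precisely $2n-4+\frac{k}{n}$ rather than a nearby constant; I would do this by tracking a single test twist through the long exact cohomology sequence associated to $0 \to \OO_{\p^2}(-n) \to \OO_{\p^2} \to i_*\OO_C \to 0$ tensored with the relevant line bundle, confirming that the $H^1$-vanishing on $C$ lifts to the required higher-cohomology vanishing on $\p^2$. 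Once both flanking terms are bounded, the subadditivity of regularity closes the argument.
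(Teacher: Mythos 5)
Your strategy is the paper's own: the paper proves the lemma by running the long exact sequence in cohomology over the (twisted) sequence of Theorem~\ref{main} --- which is precisely the proof of the subadditivity statement you invoke --- and then kills $H^1$ of the cokernel line bundle by Serre duality and a degree count, using $K_C=(n-3)H$ and $2g_C-2=n^2-3n$. The identification $\Der(\log \mathcal{A})\simeq \mathcal{D}_0(\mathcal{A})(1)$, the automatic vanishing of $H^2$ for sheaves supported on the curve, and adjunction all appear in the paper exactly as in your outline.

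However, the arithmetic in your key step is wrong, and the escape hatch you reserve for it does not exist. With $L=\OO_C(-K_C-R)$ of degree $3n-n^2-k$, the condition $\deg(L((j-1)H))>2g_C-2=n^2-3n$ reads $3n-n^2-k+(j-1)n>n^2-3n$, i.e. $j>2n-5+\frac{k}{n}$, not $j>2n-2+\frac{k}{n}$ as you assert. Moreover, there is no ``conversion from curve-regularity to ambient regularity'' left to perform: for the closed embedding $i:C\hookrightarrow \p^2$ one has $H^p(\p^2,(i_*L)(j))=H^p(C,L(jH))$ for all $p$ and $j$, so the regularity of $i_*L$ on $\p^2$ is computed by exactly the twists you already wrote down, with no correction term. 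The only genuine shift in the problem is the $+1$ coming from restating the bound for $\mathcal{D}_0(\mathcal{A}\cup C)=\Der(\log \mathcal{A}\cup C)(-1)$ (equivalently, the paper twists the sequence by $\OO_{\p^2}(-1)$ so that its cokernel becomes $\OO_C(-K_C-H-R)$, and the same shift makes the first entry of the maximum $\reg(\mathcal{D}_0(\mathcal{A}))+n$ rather than $\reg(\mathcal{D}_0(\mathcal{A}))+n-1$). Applied to the correct threshold $2n-5+\frac{k}{n}$, this $+1$ gives exactly the lemma's $2n-4+\frac{k}{n}$; applied to your threshold it would give $2n-1+\frac{k}{n}$, and the proof would not close. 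So the route is sound and coincides with the paper's, but the substitution must be redone; the stated bound cannot be recovered by bookkeeping of the kind you describe.
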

\begin{proof} The short exact sequence
\[
0 \longrightarrow D_0({\mathcal A})(-n)\longrightarrow D_0({\mathcal A}\cup C)
\longrightarrow \mathcal{O}_{C}(-K_C -H -R)\longrightarrow 0
\]
gives a long exact sequence in cohomology, so if $ D_0({\mathcal A})$ 
is $a$-regular, then 
\[
h^1( D_0({\mathcal A})(a-1))=0 = h^2( D_0({\mathcal A})(a-2)).
\]
So if $t-n-1 \ge a-1$ and $t-n-2 \ge a-2$ we have that 
\[
h^1(D_0({\mathcal A})(t-n-1))=0 = h^2(D_0({\mathcal A})(t-n-2)).
\]
This gives vanishings if $t-n \ge a$, that is, if 
$t \ge \reg D_0({\mathcal A}) +n$. The result will follow if
\[
h^1\mathcal{O}_{C}((t-2)H-K_C -R)=h^0\mathcal{O}_{C}((2-t)H+2K_C+R)=0.
\]
Using that $K_C = (n-3)H$, this holds if $\deg ((t-2)H +2K_C+R) <0$, that 
is, when
\[
t > 2n-4+\frac{k}{n}
\]
The result follows.
\end{proof}
The previous proof shows that the Hilbert function of 
$\Gamma_{*}\mathcal{O}_{C}(-K_C -H -R)$ is equal to the Hilbert 
polynomial when $t> 2n-4+\frac{k}{n}$.
\begin{prop}\label{HS}
With the hypotheses of Theorem~\ref{main}, the Hilbert function of 
$\Gamma_{*}\mathcal{O}_{C}(-K_C -H -R)$ is $nt+3(1-g_C)-n-k$
for 
\[
t < n-2 + \frac{k}{n} \mbox{ or }t > 2n-4+\frac{k}{n}.
\]
\end{prop}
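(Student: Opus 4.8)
The plan is to reduce everything to elementary cohomology of a single line bundle on the smooth curve $C$. Write $\mathcal{G} = \mathcal{O}_C(-K_C-H-R)$, so that the degree-$t$ piece of $\Gamma_*\mathcal{G}$ is $H^0(C,\mathcal{G}(t))$ and the Hilbert function in question is $t \mapsto h^0(C,\mathcal{G}(t))$. From the degree computation in the proof of Theorem~\ref{main} we already know $\mathcal{O}_C(-K_C-R)$ has degree $3n-n^2-k$, hence $\deg \mathcal{G}=2n-n^2-k$ and $\deg\mathcal{G}(t)=nt+2n-n^2-k$. Riemann--Roch on $C$ then gives $\chi(\mathcal{G}(t))=\deg\mathcal{G}(t)+1-g_C=nt+3(1-g_C)-n-k$, which is exactly the asserted polynomial $P(t)$ (the same bookkeeping that drives Proposition~\ref{HP}). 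Since $h^0(\mathcal{G}(t))-h^1(\mathcal{G}(t))=P(t)$, the whole proposition becomes a matter of controlling $h^1$ in the upper range and $h^0$ in the lower range.

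For the upper range I would invoke Serre duality on $C$: $h^1(C,\mathcal{G}(t))=h^0(C,K_C-\mathcal{G}(t))=h^0(C,2K_C+R+(1-t)H)$, and the bundle on the right has degree $2n^2-5n+k-nt$. At $t=2n-4+\tfrac{k}{n}$ this degree equals $-n$, and it only decreases as $t$ grows, so for $t>2n-4+\tfrac{k}{n}$ it is negative; a line bundle of negative degree on a curve has no sections, whence $h^1(\mathcal{G}(t))=0$ and $h^0(\mathcal{G}(t))=P(t)$. This is precisely the vanishing already isolated in Lemma~\ref{lem:regbounds}, so the upper range costs nothing new. For the lower range the argument is even more direct: $\deg\mathcal{G}(t)=nt+2n-n^2-k$ vanishes exactly at $t=n-2+\tfrac{k}{n}$ and is negative for $t<n-2+\tfrac{k}{n}$, so $h^0(C,\mathcal{G}(t))=0$ on the entire lower range. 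Thus on each of the two ranges the Hilbert function is forced by the numerical data $(n,k,g_C)$ alone, independent of where the points of $R=\mathcal{A}|_C$ sit on $C$.

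I expect the only real subtlety to be conceptual rather than computational, and it lives in the excluded middle band $n-2+\tfrac{k}{n}\le t\le 2n-4+\tfrac{k}{n}$. There both $\deg\mathcal{G}(t)\ge 0$ and $\deg(K_C-\mathcal{G}(t))\ge 0$ are possible, so neither $h^0$ nor $h^1$ is forced to vanish; their values then record whether $\mathcal{G}(t)$ is a special divisor class, i.e.\ they depend on the position of the $k$ points $R$ on the positive-genus curve $C$. This is exactly the ``subtle geometry'' flagged after Proposition~\ref{cmain}, and it is why no configuration-free formula can hold across the band. One honest caveat I would record in the write-up: in the lower range the polynomial $P(t)$ is itself negative, so there the Hilbert function equals $0$ rather than $P(t)$; the sharp statement is that outside the band the Hilbert function is numerically determined, equal to $0$ below $n-2+\tfrac{k}{n}$ and to $P(t)=nt+3(1-g_C)-n-k$ above $2n-4+\tfrac{k}{n}$.
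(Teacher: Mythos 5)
Your proof is correct and is essentially the paper's own argument: the upper range is handled exactly as in the proof of Lemma~\ref{lem:regbounds} (to which the paper delegates this case in the remark preceding the proposition) --- Serre duality plus negativity of $\deg(2K_C+R+(1-t)H)$ forces $h^1=0$, so $h^0$ equals the Riemann--Roch value $nt+3(1-g_C)-n-k$ --- while the lower range is the paper's one-line observation that $\deg((t-1)H-K_C-R)<0$ forces $h^0=0$. Your closing caveat is also legitimate and worth recording: the paper's own proof concludes $h^0=0$ in the lower range, whereas the polynomial $P(t)=nt+3(1-g_C)-n-k$ is negative there (in Example~\ref{firstex2} one has $P(3)=-1$ while the tabulated value is $0$), so the proposition's literal wording is accurate only above the band, and the sharp statement is the one you give: the Hilbert function is $0$ for $t<n-2+\frac{k}{n}$ and equals $P(t)$ for $t>2n-4+\frac{k}{n}$.
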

\begin{proof} 
If $t < n-2 + \frac{k}{n}$, then the degree of $(t-1)H-K_C-R$
is negative, so there can be no sections. 
\end{proof}
\section{Examples}
\begin{exm}\label{firstex2}
We analyze Example~\ref{firstex} in more detail. Since $C$ is a cubic curve,
$g_C=1$ and $K_C \simeq \OO_C$. Since $C$ meets every line of $\mathcal{A}$ 
in three points, $k = |C \cap \mathcal{A}|= 7$. By Proposition~\ref{HS}, the
Hilbert polynomial and Hilbert function of $\Gamma_{*}\mathcal{O}_{C}(-K_C-H -R)$
agree for $ t < \frac{10}{3}$ and $t > \frac{13}{3}$, and so applying 
Proposition~\ref{HP} and Theorem~\ref{main} we have
\vskip .1in
\begin{center}
\begin{supertabular}{|c|c|c|c|c|c|c|c|c|}
\hline $t$ & $0$ & $1$ & $2$ & $3$ & $4$ & $5$ & $6$& $7$ \\
\hline $h^0((t-1)H-K_C-R)$ & $0$ & $0$  & $0$ & $0$ & ?  & $5$ & $8$  & $11$\\
\hline
\end{supertabular}
\end{center}
\vskip .1in
Now, $H^0((4-1)H-R)$ consists of cubics through the seven singular points of  $\mathcal{A}$, 
and this space has dimension $2$, since $C$ is itself one of the three cubics, so is not counted. 
Thus, in this example the Hilbert polynomial $3t-10$ agrees with the Hilbert function for all $t$, and
\[
\HS(\Gamma_{*}\mathcal{O}_{C}(-K_C -H -R),t) = \frac{2t^4+t^5}{(1-t)^2}= \frac{2t^4-t^5-t^6}{(1-t)^3}.
\]
Terao's result \cite{t3} on reflection arrangements shows that 
$D(\operatorname{A}_3) \simeq S(-1)\oplus S(-2) \oplus S(-3)$, so
\[
\Der(\mbox{log}\operatorname{A}_3(-3))\simeq S(-5)\oplus S(-6).
\]
Taking global sections in Theorem~\ref{main} and applying 
Proposition~\ref{cmain}, we find that $\Der(\mbox{log }\operatorname{A}_3 \cup C)$ is free, with
\[
\begin{array}{ccc}
\HS(\Der(\mbox{log}\operatorname{A}_3 \cup C))
& =&\frac{t^5+t^6}{(1-t)^3} + \frac{2t^4-t^5-t^6}{(1-t)^3}\\
 & = &\frac{2t^4}{(1-t)^3}. 
\end{array}
\]
%Thus, Example~\ref{firstex} comes from an addition-deletion construction.
\end{exm}
\begin{exm}\label{secondex}
The reflection arrangement $\operatorname{B}_3$ consists of the nine planes of symmetry
of a cube in $\R^3$. The intersection of $\operatorname{B}_3$ with the affine chart 
$U_z$ is depicted below (this does not show the line at infinity $z=0$).
\begin{figure}[ht]
\subfigure{%
\label{fig:B3-a}%
\begin{minipage}[t]{0.3\textwidth}
\setlength{\unitlength}{0.78cm}
\begin{picture}(5,4.8)(-0.2,-1)
\multiput(1,0)(1,0){2}{\line(1,1){3}}
\multiput(4,0)(1,0){2}{\line(-1,1){3}}
\multiput(2.5,0)(0.5,0){3}{\line(0,1){3}}
\put(1,1.5){\line(1,0){4}}
\put(4.2,-0.5){\makebox(0,0)}%{$1$}}
\put(5.2,-0.5){\makebox(0,0)}%{$2$}}
\put(5.5,1.5){\makebox(0,0)}%{$3$}}
\put(5.2,3.5){\makebox(0,0)}%{$4$}}
\put(4.2,3.5){\makebox(0,0)}%{$5$}}
\put(3.5,3.5){\makebox(0,0)}%{$6$}}
\put(3,3.5){\makebox(0,0)}%{$7$}}
\put(2.5,3.5){\makebox(0,0)}%{$8$}}
\end{picture}
\end{minipage}
}
\caption{\textsf{The $\operatorname{B}_3$-arrangement}}
%\label{fig:B3arr}
\end{figure}
By \cite{t3}
\[
D(\operatorname{B}_3)\simeq S(-1) \oplus S(-3)\oplus S(-5).
\]
This configuration has $13$ singular points, so if the singularities were in
general position there would only be a two-dimensional space of quartics
passing through the points. However, there are three quadruple
points, and each set of lines through one of these points is a quartic 
vanishing on the singularities. A computation shows that a generic quartic
$C$ in this three dimensional space is smooth, and that $\operatorname{B}_3 \cup C$ is 
quasihomogeneous. 

By Proposition~\ref{HS}, the Hilbert polynomial and Hilbert function 
of the module $\Gamma_{*}\mathcal{O}_{C}(-K_C -H -R)$
agree for $ t < \frac{21}{4}$ and $t > \frac{29}{4}$, and so applying 
Proposition~\ref{HP} and Theorem~\ref{main} we have
\vskip .1in
\begin{center}
\begin{supertabular}{|c|c|c|c|c|c|c|c|c|}
\hline $t$ & $4$ & $5$ & $6$& $7$ & $8$ & $9$ & $10$& $11$ \\
\hline $h^0((t-1)H-K_C-R)$ & $0$ & $0$  & ? & ? & $9$  & $13$ & $17$  & $21$\\
\hline
\end{supertabular}
\end{center}
\vskip .1in
It remains to determine $H^0((t-2)H-R)$ for $t \in \{6,7\}$. The
space $H^0(4H-R)$ consists of quartics through the 
thirteen singular points of $B_3$. As observed above, this space 
has dimension $3$, but $C$ itself is one of the quartics, 
so $h^0(4H-R) =2$.
A direct calculation shows that $h^0(5H-R) = 5$, so 
\[
\HS(\Gamma_{*}\mathcal{O}_{C}(-K_C -H -R),t) = \frac{2t^6+t^7+t^8}{(1-t)^2} = \frac{2t^6-t^7-t^9}{(1-t)^3}.
\]
Since $\Der(\mbox{log}\operatorname{B}_3)(-4) \simeq S(-7)\oplus S(-9)$, taking
global sections in Theorem~\ref{main} and applying 
Proposition~\ref{cmain} shows $\Der(\mbox{log }\operatorname{B}_3 \cup C)$ 
is free, with
\[
\begin{array}{ccc}
\HS(\Der(\mbox{log}\operatorname{B}_3\cup C),t) &= &\HS(\Der(\mbox{log}\operatorname{B}_3)(-4),t)+ \frac{2t^6-t^7-t^9}{(1-t)^3} \\
 & = & \frac{t^7+t^9}{(1-t)^3} + \frac{2t^6-t^7-t^9}{(1-t)^3}\\
 & = &\frac{2t^6}{(1-t)^3}.
\end{array}
\]
\end{exm}
\noindent Example 2.2 of \cite{cmh2} shows in general that $\sing({\mathcal A}) \ne \sing({\mathcal A} \cup C)$.

\noindent{\bf Concluding remarks} Our work raises a number of questions:
\begin{enumerate}
%\item As noted, \cite{t3} shows $D(\mathcal{A})$ is free for a
%reflection arrangement $\mathcal{A}$. In this case, is there always 
%a smooth $C$ with $D(\mathcal{A} \cup C)$ free?
%\item Is there a connection to the residue map and multiarrangements,
%as in \cite{y}?
\item Does this generalize to other surfaces? For the Hilbert polynomial
arguments to work, the surface should possess an ample line bundle. 
More generally, does this generalize to higher dimensions? Note that
\cite{cmh2} shows the quasihomogeneous property will be necessary.
\item The Hilbert series of 
$\Gamma_{*}\mathcal{O}_{C}(-K_C-H -R)$ depends solely on a set 
of reduced points on a plane curve. 
If $\mathcal A = \bigcup_{i=1}^r Y_i$ with $Y_i$ 
reduced and irreducible, can an iterated construction using linkage
yield the Hilbert series?
\item  In \cite{liao}, Liao gives a formula relating Chern classes 
of logarithmic vector fields to the Chern-Schwartz-MacPherson class
of the complement, showing that on a surface the two are equal exactly
when the singularities are quasihomogeneous, and in \cite{a}, Aluffi gives
an explicit relation between the characteristic polynomial of an arrangement and the Segre class of the Jacobian scheme. Can one prove Theorem 1.5 using 
these methods?
\end{enumerate}

\noindent {\bf Acknowledgments}:  Macaulay2 computations were
essential to our work. Our collaboration began at the Mathematical
Society of Japan summer school on arrangements; Terao and Yoshinaga
were organizers and Schenck a participant, and we thank the Mathematical
Society of Japan for their generous support.
%%%%%%%%%%%%%%%%%%%%%%%%%%%%%%%%%%%%%%%%%%%%%%%%%%%%%%%%
% Back to single space
\renewcommand{\baselinestretch}{1.0}
\small\normalsize % to get previous line to take
%%%%%%%%%%%%%%%%%%%%%%%%%%%%%%%%%%%%%%%%%%%%%%%%%%%%%%%%
\pagebreak
\bibliographystyle{amsalpha}

\end{document}